\documentclass[
final
 , nomarks
]{dmtcs-episciences}


\usepackage[utf8]{inputenc}
\usepackage{subfigure}

\usepackage{amsmath}

%

\usepackage[round]{natbib}

\newtheorem{prop}{Proposition}
\newtheorem{lem}[prop]{Lemma}

\newtheorem{thm}[prop]{Theorem}
\newtheorem{cor}[prop]{Corollary}
\newtheorem{prob}[prop]{Problem}

\newcommand{\IN}{\ensuremath{\naturals
}}

\newcommand{\col}{\ensuremath{{\rm col}}}
\newcommand{\dist}{\ensuremath{{\rm dist}}}

\author{Stephan Dominique Andres \and Winfried Hochst\"{a}ttler}
\title[Game colouring number of powers of forests]{The 
game colouring number of powers of forests}
\affiliation{
Fakult\"{a}t  f\"{u}r Mathematik und Informatik, Fernuniversit\"{a}t in
Hagen, Germany}
\keywords{game  colouring number, marking game, activation strategy, graph
power, forest, game chromatic number}
\received{2015-05-22}
\revised{2015-11-12}
\accepted{2015-11-18}
\begin{document}
\publicationdetails{18}{2015}{1}{2}{648}
\maketitle
\begin{abstract}
We prove that the game colouring number of the $m$-th power of a forest
with maximum degree $\Delta\ge3$ is bounded from
above by
\[\frac{(\Delta-1)^m-1}{\Delta-2}+2^m+1,\]
which improves the best known bound by an asymptotic factor of 2.
\end{abstract}

\section{Introduction}\label{intro}

Graph colouring considers the problem to assign colours to the vertices of a given graph in such a way that adjacent vertices receive 
distinct colours. Classical graph colouring can be regarded as a one-player game, where the single player has the goal to colour every 
vertex in such a way that she uses a minimum number of colours. 
Competitive graph colouring considers the situation that there is a second player, too, who has the goal to increase the number of 
colours used. In a \emph{maker-breaker graph colouring game}, the players are usually called Alice, who tries to minimize the number of 
colours, and Bob, who tries to maximize the number of colours. In the basic variant of such a game, popularised to the graph theory 
community by \cite{bodlaender}, the players move alternately. In each move they colour exactly one uncoloured vertex of the 
given graph the vertices of which are initially uncoloured. Alice 
begins. The  most important parameter considered concerning this game is the so-called \emph{game chromatic number}, which 
is the 
smallest number of colours that is sufficent to colour every vertex in case 
both players use optimal strategies.

The maximum game chromatic number for graphs from many interesting classes of graphs has been examined by many authors. The 
first class of graphs whose maximum game chromatic number was determined were forests. The result is contained in the initial 
paper of \cite{faigleetal}. In order to prove that 4 is an upper bound for the game chromatic number of a tree, Faigle et 
al.\ used a so-called \emph{activation strategy} for Alice. This type of strategy was named and generalized 
by \cite{kierstead} and modified and used by many authors 
to obtain upper 
bounds 
for the game chromatic number of other classes of more complex graphs (some 
references can be found in the survey paper of \cite{bartnickietal}, some more recent references 
concerning graph colouring games can be taken from \cite{andreschargameperf} resp.\ \cite{yang}).  
A remarkable fact about the activation strategy 
is that it does 
not consider the colours of vertices but only the order in which they are coloured.
This fact motivated \cite{zhuplanar} to introduce the following \emph{maker-breaker marking game} defining a graph 
parameter 
that is simultanously an upper bound for the game chromatic number and a competitive version of the colouring number named by 
\cite{erdoeshajnal}. The following rules of this marking game are very similar to those of Bodlaender's 
colouring game.

Alice and Bob alternately mark vertices of a given graph $G=(V,E)$ until every vertex is marked. The way they choose the vertices to be 
marked creates a linear ordering $\le$ on the set $V$, where the smallest element is the first vertex that was marked and the largest 
the last vertex marked. The \emph{back degree} $bd_{\le}(v)$ of a vertex $v$ with respect to the ordering $\le$ is defined as the 
number of previously marked neighbours of~$v$, i.e.
\[bd_{\le}(v):=|\{w\in V\mid vw\in E,w\le v\}|.\]
The \emph{score} $sc(G,\le)$ of $G$ with respect to the linear ordering $\le$ is defined by
\[sc(G,\le):=1+\max_{v\in V}bd_{\le}(v).\]
Alice's goal is to minimize the score, Bob tries to maximize it. Let $\le^\ast$ be a linear ordering in case both players play 
according to optimal strategies. Then the \emph{game colouring number} $\col_g(G)$ of $G$ is defined as
\[\col_g(G):=sc(G,\le^\ast).\]
For a non-empty class ${\mathcal{C}}$ of graphs we define
\[\col_g({\mathcal{C}}):=\sup_{G\in{\mathcal{C}}}\col_g(G).\]

Note that, for any graph,
its game colouring number is greater or equal than its game chromatic number.
An application of the game colouring number with regard to the graph packing problem was given by \cite{kiersteadkostochka}.

In this paper we consider the game colouring number of the class of powers of forests.

We only consider finite, simple, and loopless graphs. By $\binom{V}{2}$ we denote 
the set of 2-element subsets of a set~$V$.
The $m$-th \emph{power} $G^m$ of a graph $G=(V,E)$ is defined as the graph $(V,E_m)$ with 
\[E_m=\{vw\in{\textstyle\binom{V}{2}}\mid 1\le\dist_G(v,w)\le m\},\]
where the \emph{distance} $\dist_G(v,w)$ denotes, as usual, the number of edges on a shortest path from $v$ to~$w$ in~$G$.
In particular, we have $G^0=(V,\emptyset)$ and $G^1=G$.
The \emph{square} of $G$ is the 2nd power $G^2$.

In order to examine the marking game on the power $F^m$ of a forest $F$ we will often argue with the forest $F$ itself, 
which has the same vertex set as $F^m$. The vertex sets are identified in a canonical manner. 
It is useful to define a \emph{$k$-neighbour} of a vertex $v$ as a vertex $w$ with dist$_F(v,w)=k$. 
A \emph{$k_\le$-neighbour} of a vertex $v$ is an $\ell$-neighbour for some $1\le\ell\le k$. 
Hence adjacency in $F^m$ corresponds to $m_{\le}$-neighbourhood in~$F$.

\cite{esperetzhu} and \cite{yang}  determined upper bounds for the game colouring number of squares of graphs 
depending on the 
 maximum degree of the original graph. \cite{andrestheuser} generalized a global bound for squares of graphs from 
the paper of \cite{esperetzhu} to arbitrary powers of graphs and obtained the following upper bound in the special case of 
powers 
of forests.

\begin{thm}[\cite{andrestheuser}]\label{baumsatz} 
Let $F$ be a forest with maximum degree $\Delta\ge3$. Let $m\in{\mathbb N}$. Then we have
\[{\rm col}_g(F^m)\le2\frac{(\Delta-1)^m-1}{\Delta-2}+2.\]
\end{thm}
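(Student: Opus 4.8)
The plan is to let Alice play an activation strategy on the forest $F$ itself, whose vertex set we identify with that of $F^m$, and to bound the back degree $bd_{\le}(v)$ of every vertex $v$ in the ordering $\le$ that the game produces. Root each component of $F$, so that every vertex acquires a parent, ancestors and descendants, and write $v=v_0,v_1,v_2,\dots$ for the ancestors of a fixed $v$. I split the $m_{\le}$-neighbours $w$ of $v$ in $F$ according to the lowest common ancestor $v\wedge w$: the \emph{upward neighbours} are those with $v\wedge w=v_i$ for some $i\ge1$ (equivalently, $w$ is an ancestor $v_i$ with $1\le i\le m$, or $w$ lies in the subtree of some $v_i$, $1\le i\le m$, but not in the subtree of $v_{i-1}$), and the \emph{downward neighbours} are the proper descendants $w$ of $v$ with $\dist_F(v,w)\le m$. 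The point of this split is that the upward ones can be bounded with no game-theoretic input, while it is Alice's job to keep the marked downward ones few.

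First I would dispose of the upward neighbours by an elementary tree count. For a fixed $i$, the neighbours $w$ with $v\wedge w=v_i$ are $v_i$ itself together with the vertices of the subtrees hanging at the at most $\Delta-2$ children of $v_i$ distinct from $v_{i-1}$ (at most $\Delta-1$ such children when $v_i$ is a root), each subtree truncated at radius $m-i-1$; there are hence at most $1+(\Delta-2)\cdot\frac{(\Delta-1)^{m-i}-1}{\Delta-2}=(\Delta-1)^{m-i}$ of them, and, as $\Delta\ge3$, also at most $(\Delta-1)^{m-i}$ in the root case. Summing over $i=1,\dots,m$ (fewer terms if $v$ is near a root) gives at most $\sum_{i=1}^{m}(\Delta-1)^{m-i}=\frac{(\Delta-1)^m-1}{\Delta-2}$ upward neighbours of $v$ altogether, so at most this many of them are ever marked. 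It therefore suffices to design Alice's play so that at the moment any $v$ becomes marked at most $2^m$ of its downward neighbours are already marked: then $bd_{\le}(v)\le\frac{(\Delta-1)^m-1}{\Delta-2}+2^m$ for all $v$, and $\col_g(F^m)=1+\max_v bd_{\le^\ast}(v)$ gives the claimed bound. For $m=1$ this last requirement is exactly the statement of Faigle et al.\ that at most two children of a vertex are marked before it, and the bound becomes the classical $\col_g\le4$.

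For the strategy I would take the usual activation mechanism, adapted to the forest: Alice keeps a set of active vertices, with every marked vertex active and each component root active from the start; when Bob marks a vertex $b$ she activates $b$ together with the whole path from $b$ up to its nearest previously active ancestor, and responds by marking a suitable still-unmarked vertex of that path --- for instance the one closest to that ancestor --- with a symmetric convention for the ``free'' moves she must make when no such response exists. The effect is that marked vertices are driven towards the roots, so that a downward neighbour of $v$ can only get marked before $v$ if Bob, or Alice in answer to Bob, has been working inside the subtree below some child of $v$, whereupon the activation rule closes that branch off from below. The technical core is then the invariant that, for every vertex $v$ and every $k$ with $1\le k\le m$, the number of marked descendants of $v$ at distance at most $k$ never exceeds $2^k$: each Bob move inside the subtree below $v$ is immediately answered by an Alice move that marks an ancestor of it and kills one branch Bob could otherwise have extended, so the marked part of that subtree can only grow like a ball with branching factor $2$, and $v$ itself is forced to be marked no later than the first moment that one of its child-subtrees is marked all the way up to $v$. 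Taking $k=m$ yields the desired bound $2^m$ on the marked downward neighbours of $v$.

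I expect the invariant of the previous paragraph to be the main obstacle. Turning ``the activation strategy confines the marked descendants of $v$ to a ball of radius $m$ with branching factor $2$ rather than $\Delta-1$'' into a proof needs an induction on $k$ (equivalently on the depth of the subtree below $v$), a disciplined case analysis over the two kinds of move --- Bob's arbitrary mark versus Alice's activate-and-respond --- a careful choice of which vertices are counted as ``blocked'' at a given time, and attention to the boundary cases: $v$ close to a root, subtrees cut off by the radius $m$, and turns on which Alice has no legal activation response. By contrast, the upward count and the final summation are routine.
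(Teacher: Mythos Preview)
You are aiming at a stronger inequality than the one stated. The bound in Theorem~\ref{baumsatz} is
\[
\col_g(F^m)\le 2\,\frac{(\Delta-1)^m-1}{\Delta-2}+2,
\]
whereas the quantity you compute, $\frac{(\Delta-1)^m-1}{\Delta-2}+2^m+1$, is precisely the bound of Theorem~\ref{baumsatzneu}, the paper's main result. Since $2^m-1\le\frac{(\Delta-1)^m-1}{\Delta-2}$ for $\Delta\ge3$, your target would indeed imply the theorem, but it is much harder than what is asked.

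For Theorem~\ref{baumsatz} itself the paper (following Andres--Theuser) uses the \emph{basic} activation strategy on $F$ and a symmetric count: after Alice's move every unmarked vertex $u$ has at most one child whose subtree contains a marked vertex (this is exactly Lemma~\ref{hilfsclaimneu}, a consequence of Rule~A1 alone). Hence the marked $m_\le$-neighbours of $u$ lie in one child-subtree and in the parent direction, each bounded crudely by the full $(\Delta-1)$-ary count $\frac{(\Delta-1)^m-1}{\Delta-2}$. Adding one for Bob's next move and one for the ``$+1$'' in the score gives the stated bound. Your upward count is correct and identical to this; had you used the same trivial bound downward you would have a complete proof of Theorem~\ref{baumsatz}.

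The genuine gap is your downward estimate. The invariant ``at most $2^k$ marked descendants at depth $\le k$'' does \emph{not} follow from the strategy you describe. In your rule Alice, upon Bob marking $b$, marks an unmarked vertex on the $b$--$w$ path (with $w$ the nearest already-active ancestor), e.g.\ the one closest to $w$. But once a descendant $z$ of $u$ is \emph{marked}, Bob can repeatedly mark vertices in fresh child-subtrees of $z$; each time $w=z$ and the $b$--$w$ path lies entirely below $z$, so Alice's response never moves above $z$, and $z$ can acquire up to $\Delta-1$ marked children while $u$ stays unmarked. Thus the branching factor at marked descendants is $\Delta-1$, not $2$. To obtain the $2^m$ bound the paper needs the additional Rule~A2: when $w$ is already marked, Alice marks the first unmarked vertex on the path from $w$ \emph{upward} to the root. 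This is what forces each extra child-subtree of $z$ to be paid for by a mark strictly between $z$ and $u$, and it is the content of Lemmas~\ref{consAz}--\ref{binarybound}. Your sketch neither contains this rule nor any mechanism with the same effect, so the ``branching factor $2$'' heuristic and the inductive invariant you outline cannot be carried through as written.
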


\vfill

Here we will prove a bound which is better by factor $\approx2$ for large $\Delta$.

\begin{thm}\label{baumsatzneu} 
Let $F$ be a forest with maximum degree $\Delta\ge3$. Let $m\in{\mathbb N}$. Then we have
\[{\rm col}_g(F^m)\le\frac{(\Delta-1)^m-1}{\Delta-2}+2^m+1.\]
\end{thm}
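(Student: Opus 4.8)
The plan is to use the activation strategy for Alice, in the spirit of Faigle et al.\ and Kierstead, but carried out on the forest $F$ rather than on the power $F^m$ directly. First I would root each tree of $F$ at an arbitrary vertex, giving each vertex $v\neq\text{root}$ a unique parent and, more generally, a well-defined set of ancestors. The key structural observation is that in $F^m$ a back-neighbour of $v$ (a neighbour of $v$ that was marked earlier) is an $\ell$-neighbour in $F$ for some $1\le\ell\le m$, and any such vertex lies ``close'' to $v$ in the tree: either it is a descendant of an ancestor of $v$ at distance at most $m$, or — counting the other direction — it has an ancestor within distance $m$ that is an ancestor of $v$. I would partition the potential back-neighbours of $v$ into two groups according to whether, on the path in $F$ joining them to $v$, they first go \emph{up} toward a common ancestor and then down, or whether the relevant vertex is itself on the ancestor path. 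The ``down'' part contributes a geometric-type count of at most $\sum_{i=1}^{m-1}(\Delta-1)^i+\ldots$, i.e.\ the $\frac{(\Delta-1)^m-1}{\Delta-2}$ term, while the ``up''/cross part, handled by the activation bookkeeping, contributes the $2^m$ term.

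The activation strategy proper runs as follows. Alice maintains a set of \emph{active} vertices. When Bob marks a vertex $u$, Alice activates $u$ and then walks up the tree from $u$, activating ancestors, until she reaches either the root or an already-active vertex $z$; she then marks the first still-unmarked vertex on this path that is closest to the root (or an appropriate such vertex within distance $m$), activating everything she passes. The invariant I would maintain is the usual one: along any root-to-leaf path, between two consecutive unmarked active vertices there is at least one marked vertex, so the ``activation tokens'' charged to $v$ from the ancestor direction are controlled. The subtlety compared with the distance-$1$ case is that here we must track activation through the $m$-step ancestor window, so each of the $\le m$ ancestors of $v$ relevant to $F^m$ can be reached, and the branching at each such ancestor into the subtree not containing $v$ must be bounded — this is where the $(\Delta-1)$ branching factor and the $2^m$ correction interact.

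The counting step is then to bound $bd_{\le^*}(v)$ for an arbitrary vertex $v$ at the moment $v$ gets marked. I would split the back-neighbours of $v$ in $F^m$ as: (i) $\ell$-neighbours of $v$ for $1\le\ell\le m$ that are \emph{descendants} of the ancestor $a_j$ of $v$ at height $j$ above $v$, but not in the subtree through $v$ — for fixed $j$ there are at most $(\Delta-1)\sum_{i=0}^{m-j-1}(\Delta-1)^i$ of these minus the path to $v$, and summing the surviving geometric series over $j=0,\dots,m-1$ gives the $\frac{(\Delta-1)^m-1}{\Delta-2}$ term; and (ii) the ancestors $a_1,\dots,a_m$ of $v$ themselves together with a bounded ``budget'' of active vertices that the activation invariant lets Bob place in the critical windows, which a careful induction on $m$ caps at $2^m$. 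Adding the $+1$ for $v$ itself in the score formula $sc=1+\max bd$ yields the claimed bound.

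The main obstacle I anticipate is getting the bookkeeping in case (ii) exactly right: one must show that Alice's up-walk activation strategy, when transported from $F$ to $F^m$, really does limit the number of unmarked-but-active vertices within the distance-$m$ ancestor window of $v$ to something like $2^m$ rather than the naive $(\Delta-1)^m$-type bound, which is precisely where the factor-$2$ improvement over Theorem~\ref{baumsatz} comes from. The trick will be that activating an entire root-ward path at once (rather than one vertex per move) lets Alice ``pay off'' many ancestors with a single mark, so the active surplus doubles rather than multiplies by $\Delta-1$ as one ascends each level; making this precise, and checking the edge cases when the up-walk hits the root or hits an already-active vertex early, is the delicate part. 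Everything else — rooting, the descendant count, and the final summation of the geometric series — is routine.
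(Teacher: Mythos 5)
Your proposal has the right general shape (run an activation strategy on $F$ itself and split the marked $m_{\le}$-neighbours of an unmarked vertex $u$ into an ancestor-side count and a descendant-side count), but you have attached the two terms of the bound to the wrong sides of that split, and the one genuinely new idea the theorem needs is missing. In the actual proof the term $\frac{(\Delta-1)^m-1}{\Delta-2}=\sum_{k=1}^{m}(\Delta-1)^{k-1}$ is the \emph{trivial} count of \emph{all} vertices reachable from $u$ through its parent within distance $m$ (ancestors, aunts, cousins, and so on): at distance $k$ in that direction there are at most $(\Delta-1)^{k-1}$ vertices, full stop --- no strategy is involved there. The strategy is needed entirely on the \emph{descendant} side: naively $u$ could acquire on the order of $(\Delta-1)^m$ marked descendants within distance $m$, and the whole content of the improvement over Theorem~\ref{baumsatz} is that Alice keeps the number of active (hence of marked) descendants of any unmarked vertex down to $2^m-1$. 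Your write-up asserts the opposite --- that the activation bookkeeping caps the \emph{ancestor-window} contribution at $2^m$ ``rather than the naive $(\Delta-1)^m$-type bound'' --- but the ancestor path has only $m$ vertices and the entire parent-side region is already bounded by the geometric series, so there is nothing there for the strategy to improve; meanwhile the descendants of $u$, which are the actual problem, are not contained in either of your two groups (``up-then-down'' vertices and ``vertices on the ancestor path''), so they are never bounded at all.

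The missing mechanism is the refined rule A2: when Bob marks a vertex $v$ whose nearest active ancestor $w$ on the path to the root is already marked, Alice does not make a default move but marks the first \emph{unmarked} vertex on the path from $w$ towards the root. This is what allows one to charge every ``extra'' branching of the active tree below $u$ --- every third-or-later active child of a vertex marked by Alice via A1, and every second-or-later active child of a vertex marked by Bob or via A2 --- injectively to a vertex Alice marked by A2 strictly between that branching vertex and $u$; re-hanging the corresponding subtrees at their images turns the active descendant tree into a binary tree of height at most $m-1$, whence the $2^m-1$. Your description of Alice's response (walk up to the first active vertex and mark something there) is essentially the basic activation strategy, which only guarantees that each unmarked vertex has at most two active children and therefore only recovers the bound $2\frac{(\Delta-1)^m-1}{\Delta-2}+2$ of Theorem~\ref{baumsatz}. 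The ``careful induction on $m$'' you defer to in step (ii) is precisely where a new idea is required, and that idea acts on the descendants of $u$, not on its ancestors.
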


\vfill

\section{Proof of Theorem~\ref{baumsatzneu}}\label{forestsec}

In case $m=1$, Theorem~\ref{baumsatzneu} specializes to the result of \cite{faigleetal} that the game colouring number of 
a forest is at most 4. 

Let us give a brief review of the strategy for Alice Faigle et al.\ essentially used in order to prove this upper bound.
Let $F$ be a forest (with maximum degree~$\Delta$). During the game, a special set $A$ of vertices, called \emph{active} vertices is 
updated. At the beginning, 
$A=\emptyset$. Whenever a 
player marks the first vertex in a component $T$ (which is a tree) of $F$, this vertex is activated and becomes 
root of the 
\emph{tree 
of active vertices} of $T$, which is a rooted tree induced by the vertex set $V(T)\cap A$. We denote the tree of active vertices of 
the 
component~$T$ by $T^A$ and its root by $r(T^A)$. In her first move, Alice marks an arbitrary vertex.
Whenever Bob marks a vertex $v$ in a component $T$, let $w$ be the first active vertex on the path from $v$ to $r(T^A)$ ($v=w$ might be
possible). After Bob's move, every vertex on the path from $v$ to 
$r(T^A)$ is \emph{activated}, i.e.\ it becomes a member of $A$. 
Alice's next move depends on whether $v=w$ or $v\neq w$. 
Note that $v=w$ if and only if $v=r(T^A)$ or $v$ was active ($v\in T^A$) at the time $v$ was marked by Bob.
Alice uses the following strategy:

\bigskip
\vfill

\noindent
\textbf{Alice's basic activation strategy:}   

\begin{description}
\item[Rule A1] If $v\neq w$ and $w$ is unmarked, then Alice marks $w$.
\item[Rule B] Otherwise, Alice chooses a component tree $T_0$ that contains an unmarked vertex and, if $r(T_0^A)$ exists, she marks an 
unmarked vertex with 
smallest 
distance from $r(T_0^A)$, if $r(T_0^A)$ does not exist, she marks a vertex in $T_0$ (which will become $r(T_0^A)$).
\end{description}


\vfill

It is easy to see that if Alice uses this strategy, during the whole game every unmarked vertex has at most two active children. Therefore it has at most 
three marked (1-)neighbours, hence $\col_g(F)\le4$.

\cite{andrestheuser} applied this strategy to the underlying forest $F$ of its $m$-th power $F^m$ in 
order to 
prove Theorem~\ref{baumsatz}. For this purpose we consider the game on $F$ instead of $F^m$ and have to count the maximal number of 
marked $m_{\le}$-neighbours an unmarked vertex may have.

In the proof of Theorem~\ref{baumsatzneu} we use a modification of Alice's basic activation strategy. The main 
difference 
is the additional rule A2, which gives us a significant improvement in the upper bound we establish.


\pagebreak[4]

\noindent
\textbf{Alice's refined activation strategy:}

\begin{description}
\item[Rule A1] If $v\neq w$ and $w$ is unmarked, then Alice marks $w$.
\item[Rule A2] If $v\neq w$ and $w$ is marked and there is an unmarked vertex on the path from $w$ to $r(T^A)$, then Alice marks the 
first 
unmarked vertex on this path (i.e.\ the unmarked vertex on the path that is nearest to~$w$).
\item[Rule B] Otherwise, Alice chooses a component tree $T_0$ that contains an unmarked vertex and, if $r(T_0^A)$ exists, she marks an 
unmarked vertex with 
smallest 
distance from $r(T_0^A)$, if $r(T_0^A)$ does not exist, she marks a vertex in $T_0$ (which will become $r(T_0^A)$).
\end{description}

\begin{proof}[of Theorem~\ref{baumsatzneu}]
Let $m\ge2$. Alice uses the strategy explained above. In the following arguments we consider the underlying forest $F$.
We will show that at any time in the game after Alice's move the invariant holds that any unmarked vertex of~$F$ has at most 
\[M_m:=\frac{(\Delta-1)^m-1}{\Delta-2}+2^m-1\]
marked $m_{\le}$-neighbours. 
Since Bob can increase the number of marked $m_{\le}$-neighbours of an unmarked vertex in his 
next move by at most one,
this means that Alice 
can force a score of at most $M_m+2$
in the marking game on the graph $F^m$. 

We prove the validity of the invariant by induction on the number of moves. In Alice's first move the invariant obviously holds. Assume 
now it holds after some move of Alice. We consider the next pair of moves of Bob and Alice.

We use the same notions, namely the set of active vertices $A$, active rooted tree $T^A$ with root $r(T^A)$ as in the description of the 
special case 
$m=1$ above. To be able to argue more precisely we also consider $T$ as rooted tree with root~$r(T^A)$. In this rooted tree, for a vertex 
$x$, let $p(x)$ be the predecessor of $x$ and $C(x)$ the set of children of $x$.
For $k\ge1$ we define the iterates
\begin{eqnarray*}
p^1(x)&:=&p(x),\\
p^{k+1}(x)&:=&p(p^k(x)),\\
C^0(x)&:=&\{x\},\\
C^{k+1}(x)&:=&
\bigcup_{y\in C^k(x)}C(y).
\end{eqnarray*}
For a vertex $x$, let $c_1,c_2,c_3,\ldots,c_{{\rm deg}(x)-1}$ be the children of $x$ in the order they are 
activated in the course of the game; then we call $c_i$ the \emph{$i$-th active child} of $x$. In 
the following lemmata we use the 
notion of $i$-th active child even before the move after which it is activated.

The rules of the above refined activation strategy imply

\begin{lem}[Consequence of Rule A1]\label{hilfhilfxxy}
At the time Bob marks a vertex in the subtree rooted in the second active child of a vertex $x$, the vertex $x$ will be marked after Alice's move.
\end{lem}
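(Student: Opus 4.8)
The plan is to track what happens to the vertex $x$ over the course of the game, focusing on the moment Bob first marks a vertex in the subtree rooted at the second active child $c_2$ of $x$. First I would recall the definition of the ``first active vertex on the path from $v$ to $r(T^A)$'': when Bob marks a vertex $v$, the vertex $w$ in the statement of Rule A1 is the nearest ancestor of $v$ (in the tree rooted at $r(T^A)$) that already lies in $A$. So I would argue by contradiction: suppose Bob has just marked a vertex $v$ in the subtree rooted at $c_2$, and suppose $x$ is still unmarked after Alice's reply.

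The key step is to understand why, at that moment, $x$ must already be active, i.e.\ $x\in T^A$. This is because $c_2$ is, by definition, the \emph{second} active child of $x$; for $c_2$ to have been activated at all, some earlier move must have caused activation of a path passing through $c_2$ and hence through $x$ (every activation of a vertex activates its whole ancestor-path up to the root). Actually I need to be a little careful about the ordering of events: it suffices that at the time Bob marks $v$ in the subtree of $c_2$, the child $c_2$ is already the designated second active child — and since the lemma allows us to speak of ``the $i$-th active child even before the move after which it is activated,'' I should phrase this as: either $c_2$ was already active before Bob's move (then $x$, its ancestor, was active too), or Bob's current move is the one activating $c_2$, in which case Bob marked $v$ somewhere in $c_2$'s subtree and the activation of the $v$-to-$r(T^A)$ path activates $c_2$ and $x$ simultaneously. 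In the first case $x$ was active before Bob's move; in the second case $x$ becomes active during Bob's move. In either case, immediately after Bob's move, $x\in A$ and $x$ lies strictly between $v$ and $r(T^A)$ on the tree path (since $x$ is a proper ancestor of $c_2$, which is an ancestor of $v$).

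Now I apply Rule A1 and Rule A2. After Bob's move, $w$ — the first active vertex on the path from $v$ to $r(T^A)$ — is an ancestor of $v$ that is a descendant of (or equal to) $x$, so $w\neq v$ unless $v$ itself was already active, but in the boundary subcase $v$ is in the subtree of $c_2$ and its path to the root runs through $x$; in all relevant cases $v\neq w$. If $w$ is unmarked, Alice marks $w$ by Rule A1; this does not necessarily mark $x$, so I need the invariant maintained by the strategy rather than a single move. The cleanest route: I would instead prove the contrapositive by induction together with the structural invariant that Alice's refined strategy keeps every unmarked active vertex with at most one unmarked active ancestor-chain below a marked vertex — i.e.\ show that once $c_2$ exists, Rule A1 (the case $v\neq w$, $w$ unmarked) forces Alice to ``close up'' the path, and Rule A2 forces her to fill remaining unmarked vertices on the path from $w$ to $r(T^A)$, so that by the time Bob can reach the subtree of $c_2$ the ancestor $x$ has necessarily been marked on a prior Alice move. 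The main obstacle is exactly this bookkeeping: making precise that the combination of Rules A1 and A2 prevents a second ``hole'' from opening on the root-path while Bob works his way into $c_2$'s subtree, so that the unmarked vertex $w$ Alice is forced to mark is $x$ itself (or an ancestor of $x$, which by minimality of $w$ among active vertices on the path must coincide with $x$). I expect the rest — that $v\neq w$ and that $x$ is active by the time Bob enters the second child's subtree — to be routine once the activation dynamics are spelled out.
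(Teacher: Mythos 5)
Your proposal correctly isolates the first half of the argument---that $x$ is already active when Bob first marks a vertex $v_B$ in the subtree rooted at the second active child $c_2$---but it never closes the step that actually matters, and the fallback you sketch would not work. The crux is to show that the vertex $w$ of Rule A1 (the lowest active vertex on the path from $v_B$ to $r(T^A)$) is \emph{equal} to $x$, not merely ``an ancestor of $v$ that is a descendant of (or equal to) $x$.'' This follows from two observations you do not make: first, a vertex becomes active only when Bob marks a vertex in its own subtree (activation propagates along the path from Bob's vertex to the root), so before Bob's \emph{first} move into the subtree of $c_2$ no vertex of that subtree is active; second, Alice never marks a vertex of that subtree earlier either, since Rules A1 and A2 only have her mark vertices on already-activated root-paths, and Rule B only has her mark a vertex adjacent to a marked one, while the only neighbour of $c_2$ outside its subtree is the (by assumption unmarked) vertex $x$. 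Hence at Bob's first move into the subtree of $c_2$ the lowest active ancestor of $v_B$ is exactly $x$, so $v_B\neq w=x$, $x$ is unmarked, and Rule A1 makes Alice mark $x$ immediately. One move, no induction over the whole game needed.

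Your substitute plan---proving that ``by the time Bob can reach the subtree of $c_2$ the ancestor $x$ has necessarily been marked on a prior Alice move''---is aiming at a different and in fact false statement: $x$ may well be unmarked right up until Bob's first move into $c_2$'s subtree (e.g.\ when Bob has so far only played deep inside $c_1$'s subtree and the root-path responses never reached $x$), and it is precisely Alice's response to that first move, namely Rule A1 with $w=x$, that marks it. Likewise your worry that ``Alice marks $w$ by Rule A1; this does not necessarily mark $x$'' only arises because you allow $w$ to be a proper descendant of $x$ inside $c_2$'s subtree, which the observation above rules out. So the proposal as written has a genuine gap at its central step.
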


\begin{proof}
We may assume that $x$ is unmarked before Bob's move.
By Rule B, Alice will never mark an inactive vertex that 
is not adjacent to a marked 
vertex. 
Therefore, when Bob, for the first time, marks a vertex $v_B$ in the subtree rooted in the second active 
child $c_2$ of $x$, the vertex $v_B$ is the first active 
vertex in the subtree rooted in $c_2$. Therefore the first active vertex on the path from $v_B$ to the root 
$r(T^A)$ is the vertex~$x$. 
By Rule A1, the vertex $x$ will be marked in Alice's next move.
\end{proof}

By contraposition, we conclude

\begin{lem}\label{hilfsclaimneu}
After Alice's move, for any unmarked vertex $u$, there is at most one child $c\in C(u)$ of $u$ such that in the rooted 
subtree of $c$ (including $c$) there exists at least one marked vertex.\hfill$\Box$
\end{lem}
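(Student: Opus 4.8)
The plan is to argue by contraposition from Lemma~\ref{hilfhilfxxy}. Suppose that after some move of Alice the vertex $u$ is unmarked, yet two distinct children $c',c''\in C(u)$ each have a marked vertex in their rooted subtrees (including the children themselves); I want to derive a contradiction.

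First I would establish the auxiliary fact that whenever the rooted subtree hanging from a child of a currently unmarked vertex contains a marked vertex, then it was Bob who first marked a vertex there. This is a short case analysis over the three rules of Alice's refined activation strategy, using that $u$ is unmarked: by Rule~B (with an existing root) Alice only marks the unmarked vertex nearest to $r(T_0^A)$, whose predecessor is therefore already marked and, since $u$ is unmarked, lies strictly inside $c'$'s subtree, so that subtree already contained a marked vertex; by Rule~A1 Alice marks the first active vertex $w$ on a path from Bob's vertex to the root, and if $w$ lay in $c'$'s subtree then — since $r(T^A)$ is an ancestor of $u$, hence $c'\neq r(T^A)$ and the root is not in that subtree — $w$ must have been activated by an earlier Bob move marking a descendant of $w$ inside $c'$'s subtree; Rule~A2 is analogous. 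An induction on the number of moves then shows that the first vertex ever marked in such a subtree is marked by Bob, and that very Bob move activates the whole path from it to $r(T^A)$, in particular the child at the head of the subtree. Applying this to $c'$ and $c''$, both are active, so $u$ has at least two active children.

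Next I would inspect the moment at which the second active child $c_2$ of $u$ became active. Because $u$ is the predecessor of $c_2$ and is unmarked, $c_2\neq r(T^A)$, so $c_2$ can only have been activated by a Bob move whose activated path to $r(T^A)$ passes through $c_2$; the vertex Bob marked then lies in the subtree rooted at $c_2$. At that moment $c_2$ is already the second active child of $u$, so Lemma~\ref{hilfhilfxxy} applies with $x:=u$ and yields that $u$ is marked immediately after Alice's reply. Since that reply is an Alice move occurring no later than the Alice move we started from (indeed strictly before it, as $c_2$ was already active then) and marked vertices stay marked, $u$ is marked now — a contradiction. Hence at most one child of $u$ can have a marked vertex in its subtree.

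I expect the first step to be the main obstacle: one has to make sure that Alice's strategy never lets her place the first mark inside the subtree hanging from a child of a still-unmarked vertex, and each of the three rules must be checked, Rule~B being harmless here only because the unmarked vertex $u$ itself is the closer candidate to $r(T_0^A)$. Once this is in place, the reduction to Lemma~\ref{hilfhilfxxy} through the second active child is routine, modulo the minor bookkeeping that the activation of $c_2$ happens before the current move.
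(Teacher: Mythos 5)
Your proposal is correct and follows essentially the same route as the paper, which derives this lemma purely by contraposition from Lemma~\ref{hilfhilfxxy} (leaving the details to the reader); your auxiliary observation that the first mark inside a child subtree of an unmarked vertex must come from Bob is exactly the fact the paper invokes as ``Alice will never mark an inactive vertex that is not adjacent to a marked vertex'' in the proofs of Lemmata~\ref{hilfhilfxxy} and~\ref{haupthilf}. The timing argument via the activation of the second active child is a faithful, correctly detailed expansion of the paper's one-line deduction.
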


\begin{lem}[Consequence of Rule A2]\label{consAz}
 At the time Bob marks a vertex in the subtree rooted in the $k$-th active child of a vertex $x$, $k\ge3$, the 
vertices $p(x),\ldots,p^{k-2}(x)$ 
will be marked 
after Alice's move.
\end{lem}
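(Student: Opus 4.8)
The plan is to prove Lemma~\ref{consAz} by induction on $k\ge 3$, and to reduce in each case to the single move at which the $k$-th active child $c_k$ of $x$ becomes active. The point to keep in mind is that a marked vertex stays marked: so it is enough to show that once $c_k$ is activated, then after the first Alice move at or after that activation, the vertices $p(x),\ldots,p^{k-2}(x)$ are all marked; Lemma~\ref{consAz} follows, since whenever Bob marks a vertex in the subtree rooted at $c_k$, the child $c_k$ is already active (it was activated either by that very move or by an earlier one), so Alice's reply occurs at or after the activation of $c_k$.

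First I would record how an active child of $x$ arises. Since $c_k$ has the parent $x$, it is never the vertex that creates a component root, and Alice's Rules~A1 and A2 mark only vertices that are already active; hence $c_k$ becomes active precisely when Bob marks a vertex $v_B$ in the rooted subtree of $c_k$ while $c_k$ is still inactive. (Should Alice mark $c_k$ by Rule~B before that, she marks it as the unmarked vertex nearest to the component root, and then $x$ and all ancestors of $x$ are already marked, so the claim holds anyway.) At the instant Bob's move activates $c_k$, the whole subtree of $c_k$ was inactive --- an active descendant of $c_k$ would already have put $c_k$ on the root path of an earlier Bob move --- so the first active vertex $w$ on the path from $v_B$ to $r(T^A)$ is $x$; note $x$ is active, being an ancestor of the earlier activated child $c_1$.

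For the induction itself, note first that $x$ is already marked when $c_k$ is activated: since $k\ge 3$, the second active child $c_2$ of $x$ was activated at an earlier move of Bob, and by Lemma~\ref{hilfhilfxxy} $x$ was marked after Alice's reply to that move. Because $w=x$ is marked, Alice's reply to the move activating $c_k$ does not use Rule~A1. In the base case $k=3$ we want $p(x)$ marked: if it is already marked we are done, and otherwise $p(x)$ is an unmarked vertex on the path from $w$ to $r(T^A)$, so Rule~A2 fires and Alice marks the unmarked vertex on that path nearest to $w$, which is $p(x)$. In the step $k\ge 4$, the induction hypothesis applied at the earlier activation of $c_{k-1}$ gives that $p(x),\ldots,p^{k-3}(x)$ are already marked; if $p^{k-2}(x)$ is marked too we are done, and otherwise, as $x,p(x),\ldots,p^{k-3}(x)$ are all marked, the unmarked vertex on the path from $w=x$ to $r(T^A)$ nearest to $w$ is exactly $p^{k-2}(x)$, so Rule~A2 fires and Alice marks it. In every case $p(x),\ldots,p^{k-2}(x)$ are marked after Alice's move.

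The step I expect to be the main obstacle is this last one: checking that the hypotheses of Rule~A2 (namely $v\neq w$, $w$ marked, and an unmarked vertex on the path from $w$ to $r(T^A)$) are met exactly when $p^{k-2}(x)$ is still unmarked, and that the vertex Rule~A2 then forces Alice to mark is $p^{k-2}(x)$ and not some vertex nearer to $x$ --- which is where the marked prefix $p(x),\ldots,p^{k-3}(x)$ supplied by the induction hypothesis is indispensable. The remaining points --- that $x$ is active and the subtree of $c_k$ inactive at the moment of activation, and the convention that the claim about $p^j(x)$ is vacuous whenever $p^j(x)$ would lie above the component root --- are routine.
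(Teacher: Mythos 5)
Your proposal is correct and follows essentially the same route as the paper: it establishes that $x$ is marked via Lemma~\ref{hilfhilfxxy} at the activation of the second active child, and then inducts over the successive activations of the later children, with Rule~A2 forcing Alice to mark the next ancestor $p^{i-2}(x)$ each time. You simply make explicit the details the paper's one-line induction leaves implicit (that the activating move is Bob's, that $w=x$ is the first active vertex on the path, and that the already-marked prefix $p(x),\ldots,p^{i-3}(x)$ pins down $p^{i-2}(x)$ as the vertex Rule~A2 selects).
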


\begin{proof}
As above, by the rules of the game, Alice will never mark an inactive vertex that 
is not adjacent to a marked 
vertex. Therefore the first marked vertex $v_i$ in the $i$-th child tree of $x$, $i=2,\ldots,k$, must be marked by Bob. 
By Lemma~\ref{hilfhilfxxy}, the 
vertex $x$ will be marked after Alice's move immediately after Bob marked~$v_2$. 
By induction on $i$, it follows from Rule A2 that $p^{i-2}(x)$ will be marked after Alice's move immediately 
after Bob marked $v_i$, $i=3,\ldots,k$.
\end{proof}

Let $u$ be an unmarked vertex after Alice's move.
By Lemma~\ref{hilfsclaimneu}, at most two neighbours of $u$ are marked, one child $c_0$ and the parent $p(u)$.
We will determine  
\begin{itemize}
\item[(i)] an
upper bound for the number of vertices in $V(T^A)\cap\bigcup_{k=1}^{m}C^k(u)$, namely 
\[2^m-1,\]
and
\item[(ii)] an upper bound for the number of vertices in the ancestor's part of the active tree with distance at most $m$ from $u$, 
namely 
\[\frac{(\Delta-1)^m-1}{\Delta-2}.\]
\end{itemize}
 Summing these two values obviously gives an upper bound for the number of marked $m_{\le}$-neighbours of an 
unmarked vertex after Alice's 
move. 

\bigskip

\emph{Bound in (i):} This bound is proved by a series of lemmata. We first introduce a key notion. 
A \emph{big vertex} is a
vertex $z\in V(T^A)\cap\bigcup_{k=1}^{m-1}C^k(u)$ with the property
\begin{itemize}
\item[(V1)] either $z$ has $b\ge3$ active children and was marked by Alice by Rule A1,
\item[(V2)] or $z$ has $b\ge2$ active children and was marked by Alice by Rule A2 or marked by Bob.
\end{itemize}
A \emph{rabbit} of a big vertex $z$ is an active child $c$ of $z$ which is in case (V1) neither the first nor the second active 
child of $z$ and in case (V2) not the first active child of $z$.

Let $S_1$ resp.\ $S_2$ be the set of rabbits of some big vertex of type (V1) resp.\ type (V2). 
Let $B_2$ be the set of big vertices of type (V2).
Let $D_2$ be the set of active vertices in 
$\bigcup_{k=1}^{m-2}C^k(u)$ with exactly one active child. 

Arguing with Rule A2 we can prove

\begin{lem}\label{haupthilf}
\begin{itemize}
\item[(a)]
 Let $z\in C^{k}(u)$, $1\le k\le m-1$, be a big vertex of type (V1) with $b$ children. Then there exist $b-2$ vertices from 
$V(T^A)\cap\bigcup_{i=1}^{k-1}C^i(u)$
which were marked by Alice by Rule A2 when Bob marked a vertex in the subtree rooted in a rabbit of~$z$.
\item[(b)]
 Let $z\in C^{k}(u)$, $1\le k\le m-1$, be a big vertex of type (V2) with $b$ children. Then there exist $b-1$ vertices from 
$V(T^A)\cap\bigcup_{i=1}^{k-1}C^i(u)$
which were marked by Alice by Rule A2 when Bob marked a vertex in the subtree rooted in a rabbit of~$z$.
\end{itemize}
\end{lem}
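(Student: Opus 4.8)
The plan is to handle (a) and (b) by one uniform argument that, for each rabbit $c$ of $z$, identifies the vertex Alice marks immediately after Bob first moves inside the subtree rooted in $c$. List the active children of $z$ as $c_1,c_2,\ldots,c_b$ in the order in which they are activated; the rabbits are $c_3,\ldots,c_b$ in case (V1) and $c_2,\ldots,c_b$ in case (V2). Recall that $z\in C^{k}(u)$ gives $p^{j}(z)\in C^{k-j}(u)$ for $0\le j\le k$ and $p^{k}(z)=u$, and that $u$ is unmarked after Alice's move, hence was unmarked at every earlier moment.

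The first step is to show that, at the moment Bob first marks a vertex $v_i$ inside the subtree rooted in a rabbit $c_i$, Alice must respond by Rule A2 and marks some vertex $a_i$. I would check three things. (1) Just before that move the whole subtree rooted in $c_i$ is unmarked and inactive: Bob has not moved there yet, Rules A1 and A2 mark only already-active vertices, and Rule B marks only a vertex that is not farther from $r(T^A)$ than $u$ (or a new root of another component); hence the first active vertex on the path from $v_i$ to $r(T^A)$ is $z$, i.e.\ $w=z$. (2) $z$ is already marked just before this move: in case (V1) it has been marked since the activation of its second active child $c_2$ by Lemma~\ref{hilfhilfxxy}, and every rabbit is activated afterwards; in case (V2) it must already be marked when $c_2$ is activated, since otherwise --- $z$ being active then, because the activation of $c_1$ put a move of Bob inside the subtree of $z$ --- Alice would mark $z$ by Rule A1 as in the proof of Lemma~\ref{hilfhilfxxy}, contradicting that $z$ was marked by Rule A2 or by Bob. (3) Therefore Rule A1 does not apply (since $w=z$ is marked) while Rule A2 does (the path from $z$ to $r(T^A)$ contains the unmarked vertex $u$), so Alice marks $a_i$, the vertex of that path nearest to $z$ that is still unmarked.

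Next I would locate $a_i$ and prove distinctness. Since $u$ is unmarked and lies on the path from $z$ to $r(T^A)$, the vertex $a_i$ is not farther from $r(T^A)$ than $u$; since Alice marks $a_i$ whereas $u$ is never marked, $a_i\neq u$; hence $a_i\in\{p^{1}(z),\ldots,p^{k-1}(z)\}\subseteq V(T^A)\cap\bigcup_{i=1}^{k-1}C^{i}(u)$ (each such vertex is active, being on the just-activated path from $v_i$ to $r(T^A)$). In particular no rabbit can exist when $k=1$, so that case of the lemma is vacuous. Finally, the rabbits are activated in the order $c_3,c_4,\ldots$ in case (V1) (resp.\ $c_2,c_3,\ldots$ in case (V2)), and each use of Rule A2 at $z$ marks the vertex of the path nearest to $z$ that is still unmarked, thereby moving it strictly towards $r(T^A)$; so $a_3,\ldots,a_b$ (resp.\ $a_2,\ldots,a_b$) are pairwise distinct. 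This produces $b-2$ vertices in case (a) and $b-1$ vertices in case (b), each in $V(T^A)\cap\bigcup_{i=1}^{k-1}C^{i}(u)$ and each marked by Alice by Rule A2 when Bob marked a vertex in the subtree rooted in a rabbit of $z$, which is the claim. (For part (a) the same conclusion can be extracted from Lemma~\ref{consAz} applied to $x=z$, once one notes that the marks it produces are precisely these applications of Rule A2.)

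The step I expect to be the main obstacle is fact (2) in case (V2): excluding that $z$ is still unmarked when its second active child is activated. The only obstruction to this is Rule A1 --- which would then mark $z$, and this is incompatible with $z$ being marked by Rule A2 or by Bob --- and to invoke it one first has to observe that $z$ is already active at that moment, because its first active child, and with it a move of Bob inside the subtree of $z$, precedes the second. Everything else is a direct unwinding of Rules A1, A2 and B together with the fact that only moves of Bob create new active vertices.
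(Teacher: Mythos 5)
Your proof is correct and follows essentially the same route as the paper's: you show every rabbit is created by Bob, that $z$ is already marked before the first rabbit appears (via Lemma~\ref{hilfhilfxxy}, and for type (V2) by noting that otherwise Rule A1 would have marked $z$, contradicting its type), and that Alice's Rule A2 response must land strictly between $z$ and the permanently unmarked $u$, hence in $V(T^A)\cap\bigcup_{i=1}^{k-1}C^i(u)$. Your explicit treatment of distinctness of the $a_i$ and of the vacuous case $k=1$ only makes precise what the paper leaves implicit.
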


\begin{proof}
Alice, by her strategy, will never mark a vertex in a subtree rooted in a rabbit of~$z$ unless every vertex on the path 
from 
$z$ to $r(T^A)$ is marked (which does not hold since $u=p^k(z)$ is unmarked). Therefore, every rabbit will be created by Bob. Moreover 
in case~(a), by Lemma~\ref{hilfhilfxxy}, $z$ will be marked by Alice before Bob creates the first rabbit. In case (b), again by 
Lemma~\ref{hilfhilfxxy}, $z$ will be 
marked by Bob or by Alice before Bob creates the first rabbit, otherwise $z$ would not be of type~(V2). Since $z$ is marked, the path 
from $z$ to $u$ is active before the first rabbit is created. Whenever Bob creates a rabbit, Alice marks a vertex on the path from 
$z$ to $r(T^A)$. Since $u$ is unmarked, this vertex, by Lemma~\ref{consAz}, indeed must lie on the path from $z$ to~$u$, i.e.\ the vertex is in 
$V(T^A)\cap\bigcup_{i=1}^{k-1}C^i(u)$, which proves the lemma.
\end{proof}

The preceding lemma helps us to prove the following key lemma of the proof.

\begin{lem}
There exists an injective mapping 
$f:\;S_1\cup S_2\longrightarrow B_2\cup D_2$.
\end{lem}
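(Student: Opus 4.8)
The plan is to let $f$ be the assignment implicit in Lemma~\ref{haupthilf}: for a rabbit $c$ of a big vertex $z\in C^k(u)$ I would set $f(c)=w_c$, the vertex that Alice marks by Rule~A2 in her move immediately following the (unique) move in which Bob creates the rabbit $c$, that is, first marks a vertex of the subtree rooted in $c$. By Lemma~\ref{haupthilf} (and its proof) such a $w_c$ exists and satisfies $w_c\in V(T^A)\cap\bigcup_{i=1}^{k-1}C^i(u)$; moreover, since a big vertex with $b$ active children has exactly $b-2$ rabbits in case~(V1) and $b-1$ in case~(V2), matching the number of vertices supplied by Lemma~\ref{haupthilf}, and these vertices are marked one after another, each as the then-first unmarked vertex of the relevant path, the restriction of $c\mapsto w_c$ to the rabbits of a fixed big vertex is injective.

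First I would check that $f(c)\in B_2\cup D_2$. Since $z\in C^k(u)$ is big we have $k\le m-1$, hence $w_c\in V(T^A)\cap\bigcup_{i=1}^{m-2}C^i(u)$. At the moment Alice marks $w_c$ by Rule~A2, $w_c$ is the first unmarked vertex on the (by then fully active) path from $z$ to $r(T^A)$, so the child of $w_c$ lying on this path is already marked, hence an active child of $w_c$; in particular $w_c$ has at least one active child. If it has exactly one, then $w_c\in D_2$ by definition. If it has at least two, then $w_c$ lies in $V(T^A)\cap\bigcup_{i=1}^{m-1}C^i(u)$, has $b\ge2$ active children, and was marked by Alice by Rule~A2, so it satisfies~(V2) and $w_c\in B_2$. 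Either way $f(c)\in B_2\cup D_2$.

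The hard part will be injectivity, because a priori a single Rule~A2 vertex might be the image of rabbits with different parents lying on a common branch below $u$. I would resolve this as follows. Every vertex is marked exactly once in the game, so $f(c)=f(c')=w$ forces Alice's move marking $w$ to be simultaneously the move right after Bob creates $c$ and the move right after Bob creates $c'$; thus Bob creates both rabbits with one move, say by marking $v_B$. The key observation is that when Bob creates a rabbit $c$ of a big vertex $z$ by marking $v_B$, the first active vertex on the path from $v_B$ to $r(T^A)$ is exactly $z$: no vertex of the subtree rooted in $c$ was active before this move (else $c$ would already have been active), while $z$ is active at this moment --- by Lemma~\ref{hilfhilfxxy} if $z$ has type~(V1), and because $z$ is marked before its first rabbit is created if $z$ has type~(V2), as used in the proof of Lemma~\ref{haupthilf}. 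Since the first active ancestor of $v_B$ depends only on $v_B$, we get $z=p(c)=p(c')=z'$, and then $c$ and $c'$ are both the unique child of $z$ whose subtree contains $v_B$, so $c=c'$. Hence $f$ is injective, and the lemma follows.
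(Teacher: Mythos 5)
Your proof is correct and uses essentially the same construction as the paper: $f$ sends each rabbit to the vertex Alice marks by Rule~A2 in her move immediately following Bob's rabbit-creating move, and well-definedness is verified just as in the paper via Lemma~\ref{haupthilf} together with the observation that a vertex marked by Rule~A2 has at most one active child at that moment and hence can only ever lie in $D_2$ or become a type~(V2) big vertex. The only difference is that you spell out the injectivity step --- showing that a single move of Bob cannot create two distinct rabbits, because the first active vertex on the path from Bob's marked vertex to $r(T^A)$ is uniquely determined and must equal the parent of any rabbit created by that move --- which the paper compresses into ``by construction and the rules of the game''; your added detail is sound.
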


\begin{proof}
The mapping $f$ is defined by Alice's reaction on Bob's moves creating a rabbit of some big vertex: Each such rabbit is mapped to the 
vertex Alice marks immediately in the next move. By construction and the rules of the game, the mapping is injective. We only have to 
show that it is well-defined, i.e.\ that it maps to $B_2\cup D_2$. According to the proof of Lemma~\ref{haupthilf} a vertex in the 
image $f(S_1\cup S_2)$ lies in the set $\bigcup_{k=1}^{m-2}C^k(u)$.
It suffices to show that such a vertex $y\in f(S_1\cup S_2)$ will never become a big vertex of type (V1). But this follows from the 
fact 
that $y$ is marked by Alice by rule A2, implying that $y$ has at most one active child. Therefore $y$ will be in $D_2$ as long as it 
has still one active child and become a big vertex of type (V2), i.e.\ a member of $B_2$, whenever a second child is activated.
\end{proof}

In order to analyse the number of marked neighbours of $u$, a vertex which is unmarked at the current state of the game, we modify the part $T'$ of the active 
tree $T^A$ with vertex set 
$V(T'):=V(T^A)\cap\bigcup_{k=1}^{m}C^k(u)$ in the following way. For every big vertex $z$ and every rabbit $c$ of $z$ we delete the 
active 
subtree 
rooted in $c$, move it, and append it as a child of $f(c)$.

\begin{lem}\label{binarylem}
In the modified tree $T_0$ of the part $T'$ of the active tree every  vertex has at most two children, i.e.\ $T_0$ is a binary tree 
rooted 
in the unique 
active child of~$u$.
\end{lem}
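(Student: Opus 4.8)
The plan is to show that after the rabbit-relocation surgery, every vertex in the modified tree $T_0$ has at most two children. There are two sources of children for a vertex $y$ in $T_0$: the children it already had in $T'$ that were not removed, and the rabbit-subtrees that were appended to $y$ because $y=f(c)$ for one or more rabbits $c$. I would bound each contribution and argue they cannot both be large simultaneously.

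First I would handle the children retained from $T'$. If $z$ is a big vertex of type (V1), then by definition its first two active children are not rabbits, and \emph{all} other active children are rabbits and hence removed; so after surgery $z$ keeps exactly two children from $T'$. If $z$ is a big vertex of type (V2), only its first active child is not a rabbit, so $z$ keeps exactly one child from $T'$. If $z\in V(T')$ is \emph{not} big, then $z$ has no rabbits, so it keeps all its active children; but a non-big vertex in $\bigcup_{k=1}^{m-1}C^k(u)$ has at most two active children if it was marked by Rule A1 (by (V1) failing) and at most one if it was marked by Rule A2 or by Bob (by (V2) failing) — and a vertex marked by Rule A2 indeed has at most one active child, while vertices in $C^m(u)$ are leaves of $T'$ and contribute no children at all. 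Here I would also need to invoke Lemma~\ref{hilfsclaimneu}: the unmarked vertex $u$ itself has at most one child whose subtree contains a marked vertex, so $u$'s unique relevant child (the root of $T_0$) is well-defined, and more importantly this uniqueness propagates — actually the cleanest phrasing is that $T_0$ is rooted at the unique active child of $u$, which is exactly what Lemma~\ref{hilfsclaimneu} gives.

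Next I would handle the appended rabbit-subtrees. A vertex $y$ receives an appended subtree only when $y=f(c)$ for some rabbit $c$, and by the previous key lemma such $y$ lies in $B_2\cup D_2$, i.e.\ $y$ was marked by Rule A2 and hence has at most one active child in $T'$. So the retained-child count for any $y$ in the image of $f$ is at most one. Moreover $f$ is injective, so each such $y$ receives at most one appended subtree. Combining: a vertex in the image of $f$ has at most $1+1=2$ children in $T_0$; a big vertex of type (V1) not in the image has exactly $2$; a big vertex of type (V2) not in the image has $1$; and any other vertex has at most $2$ (in fact at most $2$ only when it is a non-big Rule-A1 vertex). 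In every case the bound is two, and since $u$ is unmarked its relevant subtree hangs off a single child, so $T_0$ is a binary tree rooted there.

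The main obstacle, I expect, is the bookkeeping in the ``not big but could still have two children'' case and making sure the definition of rabbit lines up exactly so that removing all rabbits of a (V1)-vertex leaves precisely its first two active children and removing all rabbits of a (V2)-vertex leaves precisely its first active child — together with confirming that a vertex which is the image $f(c)$ of a rabbit genuinely cannot simultaneously be a (V1)-big vertex (this is the content of the key lemma and must be cited, not re-derived). The other subtlety is that the surgery must not create a vertex with more than two children at $f(c)$ when several rabbits of possibly different big vertices would map there; injectivity of $f$ rules this out, so I would state that explicitly as the crucial input.
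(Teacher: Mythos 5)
Your proof is correct and follows essentially the same case analysis as the paper's: vertices in the image of $f$ (which lie in $B_2\cup D_2$ by the key lemma and receive at most one appended subtree by injectivity of $f$) versus vertices that receive nothing, with rabbit removal leaving a (V1)-vertex exactly two retained children and a (V2)-vertex exactly one. The only imprecision is the phrase that a vertex $y$ in the image of $f$ ``has at most one active child in $T'$'' --- a $y\in B_2$ may have several, but all except the first are rabbits and are removed, which is exactly what your retained-child count records, so the argument stands.
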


\begin{proof}
\begin{enumerate}[{Case }1:]
\item
Assume that $v=f(c)$ is a vertex that receives new children after the modification.
If $f(c)\in D_2$, the vertex $f(c)$ has exactly  one active child before the modification and gets exactly another one after the 
modification. If $f(c)\in 
B_2$, it has 
exactly one active non-rabbit child before the modifcation. Since every subtree rooted in a rabbit of $f(c)$ is deleted, after the 
modification $f(c)$ has exactly two active children. 

\item\label{casetwo}
Assume that $v$ is a vertex that does not receive marked children during the modification. In case $v$ is a big vertex of type (V1), after 
the modification all but two active child subtrees have been moved away. If $v$ is a big vertex of type (V2), under the assumption of Case~\ref{casetwo},
after the modification all but one active child subtree has been moved away. If $v$ is not a big vertex, then, by the definition of big vertices, Alice's 
strategy and the assumption of 
Case~\ref{casetwo}, $v$ has at most two active children.
\end{enumerate}
This proves the lemma.
\end{proof}

\begin{lem}\label{binarybound}
$T_0$ has the same number of vertices as the original part $T'$ of the active tree and none  of the moved vertices lies outside 
$\bigcup_{k=1}^{m}C^k(u)$.
\end{lem}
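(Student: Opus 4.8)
The plan is to treat $T_0$ not as the outcome of a sequence of cut-and-paste operations but as a single tree on the vertex set $V(T')$ defined by one explicit parent function, and then to extract both assertions from a single distance estimate. Concretely, I would declare the parent in $T_0$ of a vertex $v\in V(T')$ to be $f(v)$ when $v$ is a rabbit of some big vertex, and its $T^A$-parent $p(v)$ otherwise; this is unambiguous because the big vertex of a rabbit $c$ is forced to be $p(c)$, and $f$ is the injective map of the preceding lemma, which moreover takes values in $B_2\cup D_2\subseteq V(T')$. Since no vertex is created or removed, the first assertion $|V(T_0)|=|V(T')|$ will follow as soon as $T_0$ is seen to be a tree. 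We may assume $V(T')\neq\emptyset$, as otherwise everything is trivial; then by Lemma~\ref{hilfsclaimneu} the vertex $u$ has exactly one active child $c^{*}$, it is the unique vertex of $V(T')$ at distance $1$ from $u$, and since every rabbit lies in $\bigcup_{k\ge2}C^{k}(u)$ it is not a rabbit, so $c^{*}$ will be the root of $T_0$.

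The core of the argument is that each $T_0$-parent step strictly decreases the original distance to $u$:
\[
\dist_{F}(u,p_{T_0}(v)) < \dist_{F}(u,v) \qquad\text{for every non-root }v\in V(T_0).
\]
If $v$ is not a rabbit this is immediate. If $v$ is a rabbit of a big vertex $z\in C^{k}(u)$, then $v\in C^{k+1}(u)$, while Lemma~\ref{haupthilf} together with the mapping lemma puts $f(v)$ in $V(T^A)\cap\bigcup_{i=1}^{k-1}C^{i}(u)$, so $\dist_{F}(u,f(v))\le k-1=\dist_{F}(u,v)-2$; in particular $f(v)$ cannot lie in the subtree rooted at $v$, so no subtree is ever reattached inside itself. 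Iterating $p_{T_0}$ thus strictly decreases $\dist_{F}(u,\cdot)$, which forbids cycles and forces the iteration down to the only vertex at distance $1$, namely $c^{*}$; hence $T_0$ is a tree rooted at $c^{*}$, settling the first assertion. For the second, a straightforward induction on $\dist_{F}(u,v)$ using the displayed inequality yields $\dist_{T_0}(u,v)\le\dist_{F}(u,v)$, with $u$ regarded as the external parent of $c^{*}$: the non-rabbit step reads $\dist_{T_0}(u,v)=\dist_{T_0}(u,p(v))+1\le\dist_{F}(u,v)$, and the rabbit step reads $\dist_{T_0}(u,v)=\dist_{T_0}(u,f(v))+1\le\dist_{F}(u,v)-1$. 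Since $V(T')\subseteq\bigcup_{k=1}^{m}C^{k}(u)$ forces $\dist_{F}(u,v)\le m$, we obtain $\dist_{T_0}(u,v)\le m$ for every vertex, in particular for every moved one, which is the second assertion.

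The step I expect to be the real obstacle is precisely the consistency point that motivates this viewpoint: a relocated subtree may itself contain big vertices whose rabbits are relocated too, so one has to be sure the informal delete-move-append recipe actually produces a well-defined tree rather than a tangle. Defining $T_0$ through a single parent function removes any need to order the operations, but it does rely on the earlier lemmas for two facts --- injectivity of $f$ and $f$ landing in $B_2\cup D_2\subseteq V(T')$ --- which guarantee that the new parents are legitimate vertices of $T_0$; the strict-descent inequality then does the rest in one stroke, simultaneously excluding cycles, identifying the root $c^{*}$, and bounding the depth by $m$.
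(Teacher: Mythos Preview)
Your argument is correct and in fact more careful than the paper's own two-line proof. The paper simply observes that since subtrees are moved rather than deleted the cardinality is preserved, and that each move goes ``to a lower level in the binary tree, but not below the level of the root''; it does not explicitly address the nesting issue you single out (a moved subtree may itself contain big vertices whose rabbits are relocated again), nor does it verify that the result of the moves is a genuine tree.

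Your approach differs in that you replace the sequential cut-and-paste description by a single parent function $p_{T_0}$ and then run a strict-descent argument on $\dist_F(u,\cdot)$. This buys you two things the paper only gestures at: acyclicity and the root identification come for free from the strict decrease, and the depth bound $\dist_{T_0}(u,v)\le\dist_F(u,v)$ follows by a clean induction rather than by tracking levels through an unspecified order of moves. The paper's viewpoint is more pictorial and suffices for the intended audience, but your reformulation is what one would write if pressed to make every step explicit; in particular your observation that $f$ maps a rabbit of $z\in C^{k}(u)$ into $\bigcup_{i=1}^{k-1}C^{i}(u)$ (so the $T_0$-parent step drops the $F$-distance by at least~$2$) is exactly the fact the paper uses implicitly when it says the move is ``to a lower level''.
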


\begin{proof}
The first assertion follows since we do not delete subtrees, moreover, we move them. The second follows from the fact that we move them 
to a lower level in the binary tree, but not below the level of the root (the unique active child of $u$).
\end{proof}

\begin{cor}\label{endcor}
$T_0$ has at most $2^m-1$ vertices.
\end{cor}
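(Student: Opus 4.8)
The plan is to read the bound off from the two structural facts already in hand. By Lemma~\ref{binarylem}, $T_0$ is a binary tree whose root is the unique active child of $u$; by Lemma~\ref{binarybound}, $V(T_0)=V(T')\subseteq\bigcup_{k=1}^{m}C^k(u)$, so every vertex of $T_0$ is at distance between $1$ and $m$ from $u$ in the forest $F$. A rooted tree in which every vertex has at most two children and which has at most $\ell$ levels (the root being level~$1$) has at most $2^0+2^1+\dots+2^{\ell-1}=2^\ell-1$ vertices, so it suffices to show that $T_0$ has at most $m$ levels.

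To control the levels I would compare, for each $v\in V(T_0)$, the \emph{$F$-level} of $v$ (its distance from $u$ in $F$, a number in $\{1,\dots,m\}$, which the surgery does not affect) with its \emph{$T_0$-level} (one plus its depth in $T_0$, so that the root has $T_0$-level~$1$), and prove that the $T_0$-level never exceeds the $F$-level. First, the root of $T_0$ is the active child of $u$; it cannot be a rabbit, since a rabbit is a child of a \emph{big} vertex, every big vertex lies in $\bigcup_{k=1}^{m-1}C^k(u)$ and hence is a proper descendant of $u$, whereas the parent of the root is $u$ itself. So the root is not moved, and its $F$-level and $T_0$-level both equal~$1$. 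Second, every edge of $T_0$ is of one of two kinds: an edge inherited from $T'$ (hence from $F$), which joins a vertex of $F$-level $j$ to one of $F$-level $j+1$; or a new edge $f(c)\to c$ created when the subtree of a rabbit $c$ of a big vertex $z=p(c)$ is appended to $f(c)$, in which case, by the proof of Lemma~\ref{haupthilf}, $f(c)$ lies on the path in $F$ from $u$ to $z$, so that $F\text{-level}(f(c))\le F\text{-level}(z)-1=F\text{-level}(c)-2$. Thus along every edge of $T_0$ the $F$-level increases by at least~$1$ while the $T_0$-level increases by exactly~$1$; following a root-to-$v$ path in $T_0$ and using that both quantities equal~$1$ at the root gives $T_0\text{-level}(v)\le F\text{-level}(v)\le m$. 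Hence $T_0$ has at most $m$ levels and therefore at most $2^m-1$ vertices.

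The only point that needs care is the second kind of edge, i.e.\ verifying that each rabbit is reattached at least two $F$-levels above its original position; this is precisely the content of the proof of Lemma~\ref{haupthilf}, which locates $f(c)$ in $B_2\cup D_2\subseteq\bigcup_{k=1}^{m-2}C^k(u)$ on the path from $u$ to $z$. A pleasant feature is that the comparison of levels is insensitive to the order in which the rabbit subtrees are moved and to whether some $f(c)$ is itself relocated along with another subtree: the $F$-level of every vertex is left untouched by the construction, so the inequality ``$T_0$-level $\le$ $F$-level'' survives along every edge of the final tree $T_0$ no matter how the pieces end up nested.
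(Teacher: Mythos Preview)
Your proof is correct and takes essentially the paper's route: $T_0$ is binary (Lemma~\ref{binarylem}) and has at most $m$ levels, hence at most $\sum_{i=0}^{m-1}2^i=2^m-1$ vertices. The paper reads the height bound directly off the second assertion of Lemma~\ref{binarybound}, whereas you make the implicit step explicit by comparing the $T_0$-level with the (unchanged) $F$-level along each edge of $T_0$; this is a welcome clarification, since the statement of Lemma~\ref{binarybound} speaks of $F$-position while the corollary needs a bound on depth in~$T_0$.
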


\begin{proof}
By Lemma~\ref{binarylem}, $T_0$ is a binary tree. By the second assertion of Lemma~\ref{binarybound}, $T_0$ has height at most $m-1$. 
Therefore, at level $i$ we 
have at most $2^i$ vertices, $i=0,\ldots,m-1$. 
Summing up, we get at most
\[\sum_{i=0}^{m-1}2^i=2^m-1\]
vertices.
\end{proof}

\begin{cor}\label{endendx}
The number of marked $m_{\le}$-neighbours of $u$ in the child trees of $u$ is at most
\begin{equation}\label{vgli}
\left|V(T^A)\cap\bigcup_{k=1}^{m}C^k(u)\right|\le 2^m-1.
\end{equation}
\end{cor}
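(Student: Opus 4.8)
The statement merely packages together the two facts just established, so the plan is to chain them. First I would observe that inequality~\eqref{vgli} asserts two things simultaneously: that every marked $m_{\le}$-neighbour of $u$ lying in a child subtree of $u$ belongs to $V(T')=V(T^A)\cap\bigcup_{k=1}^{m}C^k(u)$, and that $|V(T')|\le 2^m-1$. The second is immediate: by Lemma~\ref{binarybound} the modified tree $T_0$ has exactly as many vertices as $T'$, and by Corollary~\ref{endcor} that number is at most $2^m-1$.

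For the first assertion, let $w$ be a marked vertex lying in the subtree rooted at some child of $u$, with $\dist_F(u,w)\le m$. Being a descendant of $u$ at distance $k$ for some $1\le k\le m$, $w$ lies in $C^k(u)\subseteq\bigcup_{k=1}^{m}C^k(u)$, so it remains to see that $w$ is active, i.e.\ $w\in V(T^A)$ for the component $T$ containing $u$. The one point needing care is that $w$ cannot have been marked by Alice via Rule~B: since $u$ is a proper ancestor of $w$ in $T$ rooted at $r(T^A)$, the vertex $u$ is strictly closer to $r(T^A)$ than $w$ is, and $u$ has been unmarked throughout the game, so at any moment Alice applied Rule~B in $T$ the unmarked vertex $u$ was a candidate strictly nearer to $r(T^A)$ than $w$, whence Alice would never have chosen $w$. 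Also $w$ is not the first vertex marked in $T$, that vertex being $r(T^A)$, a proper ancestor of $u$ and hence different from $w$. Therefore $w$ was marked by Bob or by Alice via Rule~A1 or~A2, and in each case $w$ gets activated: a move of Bob activates the whole path from $w$ to $r(T^A)$, while under Rules~A1 and~A2 the vertex Alice marks already lies on the path from the vertex Bob just marked to $r(T^A)$ and was thus activated in Bob's preceding move. Hence $w\in V(T^A)\cap\bigcup_{k=1}^{m}C^k(u)=V(T')$.

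Combining the two halves, the set of marked $m_{\le}$-neighbours of $u$ situated in the child trees of $u$ is contained in $V(T')$, which by Lemma~\ref{binarybound} and Corollary~\ref{endcor} has at most $2^m-1$ elements; this is precisely~\eqref{vgli}. The only non-routine ingredient is the verification that every marked descendant of $u$ within distance $m$ is active, which rests entirely on the remark that Rule~B can never be responsible for marking such a vertex while $u$ itself is still unmarked; everything else is a direct appeal to Corollary~\ref{endcor} and Lemma~\ref{binarybound}.
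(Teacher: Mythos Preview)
Your proof is correct and follows the paper's approach exactly: chain the first assertion of Lemma~\ref{binarybound} (that $|V(T')|=|V(T_0)|$) with Corollary~\ref{endcor} (that $|V(T_0)|\le 2^m-1$). You additionally spell out why every marked descendant of $u$ within distance~$m$ lies in $V(T^A)$---a point the paper treats as implicit---and your justification (Rule~B can never select a proper descendant of the still-unmarked vertex~$u$, while moves by Bob or by Alice via Rules~A1/A2 always produce active vertices) is sound.
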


\begin{proof}
By the first assertion of Lemma~\ref{binarybound} and Corollary~\ref{endcor}
\[\left|V(T^A)\cap\bigcup_{k=1}^{m}C^k(u)\right|=|V(T_0)|\le 2^m-1.\]
\end{proof}

Corollary~\ref{endendx} gives us the desired bound (i).

\bigskip

\emph{Bound in (ii):}
If we consider $p(u)$ and consider the tree $T$ as rooted in $u$, then in the new ``child'' subtree rooted in $p(u)$
(which is the tree of foremothers and aunts and so on)
there might be at most
$(\Delta-1)^{k-1}$ marked vertices at distance $k$ from $u$ in the new subtree.
Therefore the number of marked vertices in the new subtree is at most
\begin{equation}\label{vglvor}
\sum_{k=0}^{m-1}(\Delta-1)^k=\frac{(\Delta-1)^m-1}{\Delta-2}.
\end{equation}

\bigskip

Combining (i) and (ii), i.e.\ adding the bounds (\ref{vgli}) and (\ref{vglvor}), in total the number of $m_{\le}$-neighbours after Alice's move is at 
most
\[\frac{(\Delta-1)^m-1}{\Delta-2}+2^m-1=M_m.\]
This completes the proof of Theorem~\ref{baumsatzneu}.
\end{proof}

\section{Open problems}

\cite{andrestheuser} specify a lower bound for the game colouring number of the class of \mbox{$m$-th} powers of forests with maximum degree $\Delta$, 
based 
on an observation of \cite{agnahall}, which is 
$\Omega(\Delta^{\left\lfloor\frac{m}{2}\right\rfloor})$. Therefore even the improved bound in Theorem~\ref{baumsatzneu} leaves a large 
asymptotic gap between lower and upper bound.

\begin{prob}\label{probeinsxxx}
Let ${\mathcal{F}}$ be the class of forests with maximum degree $\Delta$ and $m\in\IN$. Determine 
\[\col_g(\{F^m\mid 
F\in{\mathcal{F}}\}).\]
\end{prob}

If $m=2$ and $\Delta\ge9$, the gap in Problem~\ref{probeinsxxx} was reduced by \cite{esperetzhu} who proved that 
\[\Delta+1\le\col_g\{F^2\mid F\in{\mathcal{F}}\}\le\Delta+3.\]

It might be that a generalization of the activation strategy can be applied to powers of members of graph classes with some tree 
decomposition structure.

\begin{prob}
Let ${\mathcal{T}}_k$ be the class of partial $k$-trees with maximum degree $\Delta$ and $m\in\IN$. Determine 
\[\col_g(\{G^m\mid 
G\in{\mathcal{T}}_k\}).\]
\end{prob}

More generally,

\begin{prob}
Let ${\mathcal{G}}_k$ be the class of $k$-degenerate graphs with maximum degree $\Delta$ and $m\in\IN$. Determine 
\[\col_g(\{G^m\mid 
G\in{\mathcal{G}}_k\}).\]
\end{prob}

Exact values for the game colouring number of powers of special forests are only known for large paths, cf.\ \cite{andrestheuser}.

\begin{prob}
Determine the exact values $\col_g(F^m)$ for all $m\in\IN$ and interesting special forests $F$.
\end{prob}

\nocite{*}
\bibliographystyle{abbrvnat}
\bibliography{final_Andres_Hochstaettler_colpowerforest}
\label{sec:biblio}

\end{document}